\sloppy\pagestyle{plain}
\newcommand{\QQ}{{\mathbb Q}}
\newcommand{\PP}{{\mathbb P}}
\newcommand{\CC}{{\mathbb C}}
\newcommand{\Aff}{{\mathbb A}}
\newcommand{\Db}{\mathcal{D}^b}
\renewcommand{\O}{\mathcal{O}}
\newtheorem{theorem}[equation]{Theorem}
\newtheorem*{theorem*}{Theorem}
\newtheorem{proposition}[equation]{Proposition}
\newtheorem*{proposition*}{Proposition}
\newtheorem{lemma}[equation]{Lemma}
\newtheorem*{corollary*}{Corollary}
\newtheorem*{problem*}{Problem}
\newtheorem*{question*}{Question}
\newtheorem*{construction*}{Construction}
\newtheorem*{maintheorem*}{Main Theorem}
\theoremstyle{definition}
\newtheorem*{example*}{Example}
\newtheorem*{definition*}{Definition}
\theoremstyle{remark}
\newtheorem{remark}[equation]{Remark}
\newtheorem*{remark*}{Remark}
\makeatletter\@addtoreset{equation}{section} \makeatother
\author{
Victor Przyjalkowski}
\title[Singularities of Landau--Ginzburg models and derived categories]
{Singularities of Landau--Ginzburg models for complete intersections and derived categories}
\address{\emph{Victor Przyjalkowski}
\newline
\textnormal{Steklov Mathematical Institute of Russian Academy of Sciences, 8 Gubkina street, Moscow 119991, Russia.}
\newline
\textnormal{\texttt{victorprz@mi-ras.ru, victorprz@gmail.com}}}
\begin{document}

\begin{abstract}
Mirror symmetry predicts that bounded derived category of a smooth Fano variety is equivalent to Fukaya--Seidel category of its
Landau--Ginzburg model. It is expected that fibers of Landau--Ginzburg model with ordinary double points correspond
to an exceptional collection of a Fano variety. We verify this expectation on a numerical level for Fano complete intersections
and Calabi--Yau compactifications of their toric Landau--Ginzburg models of Givental's type.
\end{abstract}

\maketitle

\section{Introduction}
\label{section:introduction}
Mirror symmetry relates smooth Fano varieties and Landau--Ginzburg models --- certain families of Calabi--Yau varieties.
Various conjectures relate symplectic geometry of Fano varieties and algebraic one for their Landau--Ginzburg models, and vice versa.
One of the strongest Mirror Symmetry conjectures, Kontsevich's Homological Mirror Symmetry (HMS) conjecture claims the correspondence
in terms of derived categories. More precise, given a Fano variety $X$ and its Landau--Ginzburg model $w\colon Y\to \CC$, one can
equip them by symplectic forms and consider them as algebraic varieties as well as symplectic ones. Algebraic properties
of $X$ are encoded by its bounded derived category of coherent shea\-ves~$\Db(X)$; for $(Y,w)$ one can consider its relative version ---
derived category of singularities $\Db_{\mathrm{Sing}}(Y,w)$. Symplectic properties are encoded by Fukaya category $Fuk(X)$
and Fukaya--Seidel category $FS(Y,w)$, respectively.
HMS 
conjecture claims that these categories are cross-equivalent:
$$
\Db(X)\simeq FS(Y,w),
$$
$$
Fuk(X)\simeq \Db_{\mathrm{Sing}}(Y,w).
$$
We refer to~\cite{Kon94} for details.

Unfortunately, it is very hard to prove 
HMS conjecture even in particular cases.
So to study Mirror Symmetry it is natural to consider another mirror symmetry conjectures and verify some (numerical)
expectations of HMS. Another reason of doing this is that there is no general method to construct a Landau--Ginzburg
model for a given Fano variety. Let us mention Strominger--Yau--Zaslow approach to construct mirror objects
via dual special Lagrangian tori fibrations, see~\cite{SYZ96}, and its algebro-geometric interpretation known as
Gross--Siebert program~\cite{GS19}. However we use another approach: we construct and study Landau--Ginzburg models for Fano varieties
via toric Landau--Ginzburg models, see details in~\cite{Prz18b}. In short, toric Landau--Ginzburg model
for a Fano variety $X$ of dimension~$n$ is a Laurent polynomial $f$ in $n$ variables, satisfying the following.
First, it should combinatorially correspond to
some toric degeneration of the variety $X$. Second, the fa\-mi\-ly~$\{f=\lambda\}$, $\lambda\in \CC$,
has periods that correspond to Gromov--Witten invariants of~$X$. Finally, there should exist a (log) Calabi--Yau compactification,
that is a fiberwise compactification of $\{f=\lambda\}$ to proper family of Calabi--Yau varieties $w\colon Y\to \CC$, such that $Y$ is smooth
and quasi-projective, or even a compactification $u\colon Z\to \PP^1$, where $Z$ is projective, smooth outside $u^{-1}(\infty)$,
and $-K_Z=u^{-1}(\infty)$.

There exist several conjectures related to singular fibers of log Calabi--Yau compactification.
Let us mention a few ones. Let $X$ be a smooth Fano variety of dimension $n$, let~$w\colon Y\to \CC$ be a Calabi--Yau compactification
of its toric Landau--Ginzburg model, and let $u\colon Z\to \PP^1$ be its log Calabi--Yau
compactification. In~\cite{ChP20} it was conjectured that the number
of components of $u^{-1}(\infty)$ equals $\dim |-K_X|$; in the same paper this conjecture was verified for standard Landau--Ginzburg
models of smooth Fano threefolds and of Fano complete intersections.
Another conjecture considers reducible fibers of $w\colon Y\to \CC$.
Let~$k_Y$ be the number of components of reducible fibers minus the number of reducible fibers. The conjecture claims that~$k_Y=h^{1,n-1}(X)$.
It was verified for smooth Fano threefolds (see~\cite{Prz13} and~\cite{Prz17}) and Fano complete intersections
(see~\cite{PSh15}). Note that in the definition of~$k_Y$ multiplicities of components are not taken into account.
However these multiplicities also have an impact on geometry of $X$. That is, in~\cite{KP09} and~\cite{LP25a},~\cite{LP25b}
it was proven that for smooth Fano threefolds a general deformation of the family $(Y,w)$ contains a reducible fiber with non-unipotent monodromy if and only if $X$ is rational. In almost all cases all reducible fibers are either non-reduced or reduced and have simple normal crossings.
It can be derived from Griffiths--Landman--Grothendieck theorem (see~\cite{Ka70}) and Mumford's semistable reduction theorem
(see~\cite{KKMSD73}) that the monodromy in the first case is not unipotent, and in the second case it is unipotent.
(In some cases one needs to compute the monodromy directly.)
Finally, the reducible fibers conjecture has a deep generalization called Katzarkov--Kontsevich--Pantev conjecture,
see~\cite{KKP17}. It is motivated by HMS conjecture
and can be considered as a ``linearization'' of its part.
The conjecture claims equality of Hodge numbers $h^{p,q}(X)$ and $f^{n-p,q}(Y,w)$, where $f^{n-p,q}(Y,w)$
are Hodge-type numbers for $(Y,w)$. For del Pezzo surfaces this conjecture were proved in~\cite{LP16},
and for Fano threefolds in~\cite{ChP25}.

Thus, we considered the fiber over infinity and reducible fibers.
Since objects of~$FS(Y,w)$ are vanishing (to singularities of fibers) Lagrangians,
the rest ``interesting'' fibers of  $(Y,w)$ are singular,
but irreducible. Often this means that such fibers contain one ordinary double point.
The simplest case is a Lefschetz family, that is one whose fibers are smooth except for
$k$ ones which have $1$ ordinary double point in each one.
In~\cite{Se08} a subca\-te\-go\-ry~$\mathcal{C}$ of $FS(Y,w)$ was constructed and it was proved that it has
a semiorthogonal decomposition into $k$ objects:
$$
\mathcal{C}=\langle \mathcal{L}_1,\ldots,  \mathcal{L}_k\rangle.
$$
According to\cite{GPS20, GPS24a, GPS24b},  
the category $\mathcal{C}$ is ``large enough''.
This imp\-lies~$\mathcal{C}\simeq FS(Y,w)$.
Thus, HMS conjecture predicts that $\Db(X)$ has a full exceptional collection consisting
of $k$ objects.

Unfortunately, a Landau--Ginzburg model of a Fano variety rarely is of Lefschetz type.
However if it has $k$ fibers having only one ordinary double point, then in~\cite{AAK16}
it was constructed a subca\-te\-go\-ry~$\mathcal{C}$ of $FS(Y,w)$
such that
$$
\mathcal{C}=\langle \mathcal{R}_Y, \mathcal{L}_1,\ldots,  \mathcal{L}_k\rangle,
$$
where $\mathcal{R}_Y$ is some category, and $\mathcal{L}_i$ are some objects. It is expected
that $\mathcal{C}$ is ``large enough'' which implies
$
\mathcal{C}=FS(Y,w)
$.
(Proving this expectation has strong symplectic difficulties though.)
In particular, HMS conjecture implies that $\Db(X)$ has a semiorthogonal decomposition
with $k$ exceptional objects as well.
In this paper we prove the evidence for this expectation for Fano complete intersections.

More precise, let $X\subset \PP^N$ be a smooth Fano complete intersection of hypersurfaces
of degrees $d_1,\ldots, d_k$. Let $n=\dim(X)$ and let $i_X=N+1-\sum d_i$ be its Fano index.
The following is well known.
\begin{proposition}
\label{proposition:Fano semiorthogonal decomposition}
One has
$$
\Db(X)\simeq \langle\mathcal{R}_X,\O_X,\ldots,\O_X(i_X-1)\rangle.
$$
\end{proposition}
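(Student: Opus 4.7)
The plan is to define $\mathcal{R}_X$ as the right orthogonal complement of the line-bundle collection $\O_X, \O_X(1), \ldots, \O_X(i_X-1)$ inside $\Db(X)$; then the asserted semiorthogonal decomposition becomes a tautology once this collection is shown to be exceptional. So the entire content of the proposition lies in that exceptionality check.

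First I would reduce every relevant computation to sheaf cohomology via the identification
$$
\mathrm{Ext}^q_X(\O_X(i),\O_X(j)) \cong H^q(X,\O_X(j-i)), \qquad 0\le i,j\le i_X-1.
$$
Exceptionality of each $\O_X(i)$ together with semiorthogonality of the collection then boils down to two vanishing statements: $H^q(X,\O_X)=0$ for $q>0$, and $H^q(X,\O_X(-m))=0$ for every $q\ge 0$ whenever $1\le m\le i_X-1$.

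Next I would dispatch these using Kodaira vanishing (legitimate because $X$ is smooth by hypothesis) together with Serre duality and the adjunction $K_X\cong \O_X(-i_X)$, which holds for a smooth complete intersection of multidegree $(d_1,\ldots,d_k)$ in $\PP^N$. For the first vanishing, Kodaira applied to the ample line bundle $-K_X=\O_X(i_X)$ gives $H^q(X,K_X\otimes\O_X(i_X))=H^q(X,\O_X)=0$ for $q>0$. For the second, Serre duality rewrites $H^q(X,\O_X(-m))$ as the dual of $H^{n-q}(X,\O_X(m-i_X))$, where $n=\dim X$; for $q<n$, Kodaira applied to the ample bundle $\O_X(m)$ kills this group, while for $q=n$ the dual becomes $H^0(X,\O_X(m-i_X))$, which vanishes since $\O_X(1)$ is ample and $m-i_X<0$.

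With exceptionality in place, $\mathcal{R}_X$ is defined as the right orthogonal complement and the decomposition is immediate. The main ``obstacle'' is really only careful bookkeeping of the index range $1\le m\le i_X-1$ and the boundary case $q=n$; there is no substantive difficulty, which is why Proposition~\ref{proposition:Fano semiorthogonal decomposition} is quoted as well known rather than given a new argument.
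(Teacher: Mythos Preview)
Your proposal is correct. The paper itself gives no proof of this proposition at all; it is introduced only with the phrase ``The following is well known'' and is used as background for the mirror-symmetric expectation on ordinary double fibers. Your argument via Kodaira vanishing, Serre duality, and the adjunction $K_X\cong\O_X(-i_X)$ is exactly the standard verification, and you have correctly identified that defining $\mathcal{R}_X$ as the right orthogonal to the admissible subcategory generated by the exceptional collection makes the decomposition automatic. Your closing sentence already anticipates this: the proposition is quoted, not proved, in the paper.
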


A \emph{toric Landau--Ginzburg model of Givental's type} for $X$
is a Laurent polynomial
$$
f_{X}=\frac{\prod_{i=1}^k(x_{i,1}+\ldots+x_{i,d_i-1}+1)^{d_i}}{\prod_{i=1}^k \prod_{j=1}^{d_i-1} x_{i,j}\prod_{j=1}^{i_X-1} y_j}+y_1+\ldots+y_{i_X-1}\in \CC[x_{i,j}^{\pm 1}, y_{s}^{\pm 1}].
$$
In~\cite{PSh15} it was proved that $f_X$ admits a Calabi--Yau compactification, and that its \emph{central fiber} (i.\,e. the fiber over $0$)
consists of $h^{1,n-1}(X)$ components. The main result of the paper is the following.
\begin{theorem}
\label{theorem:LG ODP}
Let $X$ be a smooth Fano complete intersection.
Let $(Y,w)$ be a Calabi--Yau compactification of its toric Landau--Ginzburg
model of Givental's type. Then, in addition to possibly singular central fiber, there are exactly $i_X$ singular fibers,
and all of them have a unique ordinary double point.
\end{theorem}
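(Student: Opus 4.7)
Since $Y$ is smooth, a fiber $w^{-1}(\lambda)$ is singular exactly when it contains a critical point of $w$, so the plan splits into two parts: classify the critical points of the Laurent polynomial $f_X$ on the algebraic torus $(\CC^*)^n$, and rule out critical points of $w$ lying in the boundary $Y\setminus(\CC^*)^n$ that map to values in $\CC\setminus\{0\}$.

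For the torus part, write
$$
f_X=\frac{M(x)}{\prod_{l=1}^{i_X-1} y_l}+\sum_{l=1}^{i_X-1} y_l,\qquad M(x)=\frac{\prod_{i=1}^k(x_{i,1}+\ldots+x_{i,d_i-1}+1)^{d_i}}{\prod_{i=1}^k\prod_{j=1}^{d_i-1}x_{i,j}}.
$$
The equations $\partial_{y_s}f_X=0$ read $y_s\prod_l y_l=M$, forcing $y_1=\ldots=y_{i_X-1}=:y$ with $y^{i_X}=M$. The equations $\partial_{x_{i,j}}f_X=0$ reduce to $\partial_{x_{i,j}}\log M=0$, i.e.\ $d_i/(x_{i,1}+\ldots+x_{i,d_i-1}+1)=1/x_{i,j}$, which force all $x_{i,j}$ for fixed $i$ to coincide and then to equal $1$. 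Hence there is a unique $x$-solution with $M=\prod_id_i^{d_i}$ and exactly $i_X$ choices of $y$, yielding $i_X$ critical points with pairwise distinct nonzero critical values $i_X\zeta\bigl(\prod_id_i^{d_i}\bigr)^{1/i_X}$ as $\zeta$ runs over the $i_X$-th roots of unity.

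Next I verify non-degeneracy. Since $\partial_{x_{i,j}}M=0$ at any such critical point, the mixed partials $\partial_{x_{i,j}}\partial_{y_s}f_X$ vanish and the Hessian is block-diagonal in $(x,y)$. A direct computation gives the $y$-block as $(1/y)(I+J)$ with $J$ the $(i_X-1)\times(i_X-1)$ all-ones matrix, whose determinant is $i_X/y^{i_X-1}\neq 0$; the $x$-block decomposes further into pieces indexed by $i$, with the $i$-th piece a nonzero scalar multiple of $I_{d_i-1}-J_{d_i-1}/d_i$, a matrix with determinant $1/d_i\neq 0$. The Hessian is therefore non-degenerate, and the holomorphic Morse lemma identifies the unique singularity of the corresponding fiber at this critical point as an ordinary double point.

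The main obstacle is the remaining step: excluding critical points of $w$ in $Y\setminus(\CC^*)^n$ with nonzero critical value. My plan is to exploit the explicit construction of the Calabi--Yau compactification in~\cite{PSh15}, which realizes $Y$ as a resolution of a natural toric compactification of the family $\{f_X=\lambda\}$. By inspecting how the boundary divisor meets a general fiber, one shows that for $\lambda\neq 0$ the closure of $\{f_X=\lambda\}$ meets the added boundary transversally along a smooth locus, so any singularities introduced by the compactification are confined to the central fiber $w^{-1}(0)$. Combining this with the torus analysis produces exactly $i_X$ singular fibers of $w$ above $\CC\setminus\{0\}$, each containing a unique ordinary double point.
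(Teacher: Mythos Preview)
Your torus computation and Hessian analysis are correct and match the paper's Lemmas~2.1 (second half) and~2.3 essentially line for line; your packaging via $I+J$ and $I_{d_i-1}-J_{d_i-1}/d_i$ is arguably cleaner than the paper's explicit block matrix together with its auxiliary Lemma~2.2, but the content is the same.

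The genuine gap is the boundary step, which you correctly flag as the main obstacle but do not actually carry out. Your description---that for $\lambda\neq 0$ the fiber closure ``meets the added boundary transversally along a smooth locus''---is not quite the right picture: the compactification $Y'$ from~\cite{PSh15} is \emph{singular} along its boundary, and the fibers of $Y'\to\Aff^1$ are singular there too. What the paper actually does (and what you would need to do) is follow the crepant resolution procedure of~\cite{PSh15} through several explicit local analytic coordinate changes, in each case bringing the equation of $Y'$ into a form like $\prod u_i^{d_i}=\prod v_j$ that is \emph{independent of $\lambda$}; hence near the exceptional divisor the family is locally a product over the base and all fibers are smooth there. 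This case analysis (chart $y_0\neq 0$ with some $y_j\neq 0$; chart $y_0\neq 0$ with all $y_j=0$; chart $y_0=0$) occupies most of the paper's Lemma~2.1 and is where the real work lies.

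One further point you omit: the theorem is stated for an arbitrary Calabi--Yau compactification $(Y,w)$, not just the one from~\cite{PSh15}. The paper disposes of this at the outset by invoking~\cite{Ka08}: any two such compactifications differ by flops, and flops preserve ordinary double points in fibers. Without this remark your argument, even once completed, would only cover one specific model.
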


\begin{remark}
\label{remark:critical values}
The proof of~\ref{theorem:LG ODP} implies that if $X$ is a complete intersection of hypersurfaces of degrees $d_1,\ldots,d_k$,
and $d=d_1^{d_1}\cdot\ldots\cdot d_k^{d_k}$,
then coordinates of
the singular points are~$\varepsilon i_Xd^{\frac{1}{i_X}}$, where $\varepsilon$ is a primitive $i_X$-th root of unity.
\end{remark}

\begin{remark}
Recall that for a smooth Fano weighted complete intersection $X$ an analog of toric Landau--Ginzburg model $f_X$ was
constructed in~\cite{Prz07}.
One can easily check that periods for $f_X$ coincide with regularized generating series of Gromov--Witten invariants for $X$.
Moreover, by~\cite[Theorem 2.2]{ILP13}, the polynomial $f_X$ corresponds to a toric degeneration of $X$. However it is not clear how to extend
proofs of existence of Calabi--Yau compactifications from~\cite{PSh15} or~\cite{Prz18a} to the weighted case.
Moreover, if $X$ is quasi-smooth, to extend the definition of toric Landau--Ginzburg model to $X$ one needs to use
orbifold Gromov--Witten invariants instead of the usual ones.
Conjecturally, generating
series $\widetilde{I}^X_0$ for one-pointed orbifold Gromov--Witten invariants for a quasi-smooth weighted complete intersection looks similar to
one for a smooth weighted complete intersection.
Following~\cite{Prz07}, one can construct a suggestion $f_X$ for toric Landau--Ginzburg model for a quasi-smooth weighted complete
intersection. Its periods coincide with $\widetilde{I}^X_0$,
and, again by~\cite[Theorem 2.2]{ILP13}, the polynomial $f_X$ corresponds to a toric degeneration of~$X$.
Thus one can consider $f_X$ as a natural suggestion for toric Landau--Ginzburg model for $X$.

In his thesis C.\,Avila considered $\QQ$-Fano index $1$ threefold weighted hy\-per\-sur\-fa\-ces~$X$,
that is, $\QQ$-factorial terminal hypersurfaces $X$ of degree $a=a_1+a_2+a_3+a_4$ in~$\PP(1,a_1,a_2,a_3,a_4)$, see~\cite{Av24}.
There are $95$ families of such hypersurfaces~\cite{IF00}.
C.\,Avila used an alternative approach to construct a Calabi--Yau compactification of $f_X$ and proved analog of Theorem~\ref{theorem:LG ODP} for it. More precise, the family given by $f_X$ has $a$-to-$1$ cover of Landau--Ginzburg model $f_\PP$ of the weighted projective space
$\PP=\PP(a_1,a_2,a_3,a_4)$, cf.~\cite[Remark 4.5]{ILP13}. One can prove that $f_\PP$ has a Calabi--Yau compactification
(cf.~\cite[Theorem 1.15]{CG11}); thus, $f_X$ has a Calabi--Yau compactification (possibly except for the fiber over $0$) as well.
One can also prove that the fiber of the compactification over $0$ also lies in the anticanonical linear system. Finally,
since there are exactly $a$ non-central singular fibers of the compactification of $f_\PP$, and all of them has a single node,
the compactification of $f_X$ has $1=i_X$ singular fiber away from $0$, and this fiber has one node.
\end{remark}

\medskip

{\bf Acknowledgements.}
This work was supported by the Russian Science Foundation under grant no. 25-11-00057, \url{https://rscf.ru/en/project/25-11-00057/}.

\section{Proof of the main theorem}
\label{section:main theorem}
Note that different Calabi--Yau compactifications of a given Laurent polynomials differ by flops (see~\cite[Theorem 1]{Ka08}), and these flops preserves ordinary double points
in fibers. Thus, to prove Theorem~\ref{theorem:LG ODP}, we may choose any compactification we would like.

Let $X\subset\PP^N$ be a Fano complete intersection of hypersurfaces of degrees $d_1,\ldots,d_k$, let $\dim(X)=N-k=n$, and let $i_X=N+1-\sum d_i$.
Put
$$
f_{X}=\frac{\prod_{i=1}^k(x_{i,1}+\ldots+x_{i,d_i-1}+1)^{d_i}}{\prod_{i=1}^k \prod_{j=1}^{d_i-1} x_{i,j}\prod_{j=1}^{i_X-1} y_j}+y_1+\ldots+y_{i_X-1}\in \CC[x_{i,j}^{\pm 1}, y_{s}^{\pm 1}].
$$
We follow compactification procedure described in~\cite{PSh15}. That is, for $i_X>1$ we consider 
\begin{equation*}\label{eq:global-resolution-Y-prime}
F=y_0^{i_X}\prod_{i=1}^k(x_{i,1}+\ldots+x_{i,d_i})^{d_i}-(\lambda y_0-y_1-\ldots-y_{i_X-1}){\prod_{i=1}^k \prod_{j=1}^{d_i} x_{i,j}\prod_{s=1}^{i_X-1} y_s}
\end{equation*}
in
$$
\PP^{d_1-1}\times\ldots,\PP^{d_k-1}\times \PP^{i_X-1}\times \Aff^1
$$
with coordinates $x_{i,j}$, $y_s$, $\lambda$; here $i=1,\ldots,k$, $j=1,\ldots,d_i$, $s=0,\ldots,i_X-1$.
For $i_X=1$ the hypersurface $Y'$ is given by
\begin{equation*}\label{eq:global-resolution-Y-prime}
F=\prod_{i=1}^k(x_{i,1}+\ldots+x_{i,d_i})^{d_i}-\lambda \prod_{i=1}^k \prod_{j=1}^{d_i} x_{i,j}
\end{equation*}
in
$$
\PP^{d_1-1}\times\ldots,\PP^{d_k-1}\times \Aff^1
$$
with coordinates $x_{i,j}$, $\lambda$; here $i=1,\ldots,k$, $j=1,\ldots,d_i$, $s=0,\ldots,i_X-1$.
Obviously, $Y'$ with the natural projection $w'\colon Y'\to \Aff^1$ is a compactification of the family $\{f_X=\lambda\}$.
To get a Calabi--Yau compactification of $f_X$, we need to crepantly resolve singularities of~$Y'$.

\begin{lemma}
\label{lemma: only iX points}
Let $w\colon Y\to \Aff^1$ be a crepant resolution of $w'\colon Y'\to \Aff^1$. Then
$w^{-1}(\lambda)$ for~$\lambda\neq 0$ is smooth unless $\lambda^{i_X}=i_X^{i_X}\prod d_i^{d_i}$;
in the latter case
$w^{-1}(\lambda)$ has exactly one singular point.
In particular, $w\colon Y\to \Aff^1$ has $i_X$ non-central singular fibers, and these fibers have a unique singular point.
\end{lemma}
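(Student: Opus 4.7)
The plan is to combine an explicit critical-point calculation for $f_X$ on the open algebraic torus with a boundary analysis showing that the compactification contributes no singular fibers over $\lambda\neq 0$. Since any two crepant resolutions of $Y'$ differ by flops that preserve singularities of fibers, I would fix the specific $w\colon Y\to\Aff^1$ constructed in~\cite{PSh15}. Over the open torus $T=(\CC^*)^n$ the defining equation $F=0$ reduces to $f_X=\lambda$, so $Y'|_T$ is the graph of $f_X$ and is automatically smooth; hence $Y\to Y'$ is an isomorphism there, and a point in this locus is singular in its fiber of $w$ if and only if it is a critical point of $f_X$ in $T$.

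Next I would compute these critical points. Writing $s_i=1+x_{i,1}+\ldots+x_{i,d_i-1}$ and letting $M$ denote the monomial part of $f_X$, the logarithmic derivative $\partial_{x_{i,j}}f_X = M(d_i/s_i-1/x_{i,j})$ forces $x_{i,j}=s_i/d_i$ for each $j\in\{1,\ldots,d_i-1\}$; summing in $j$ and using $s_i-1=\sum_j x_{i,j}$ yields $s_i=d_i$ and $x_{i,j}=1$. The equations $\partial_{y_s}f_X=0$ give a common value $y_s=y=M$, and substitution then forces $y^{i_X}=\prod_i d_i^{d_i}$. Thus $f_X$ has exactly $i_X$ critical points, parametrized by $i_X$-th roots of unity $\varepsilon$, with pairwise distinct critical values $\lambda=i_X\varepsilon\bigl(\prod_i d_i^{d_i}\bigr)^{1/i_X}$; in particular each affected fiber inside $T$ contains a unique critical point.

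The essential remaining step will be to prove that the boundary $Y'\setminus T$ and its crepant resolution do not introduce extra singular points into fibers over $\lambda\neq 0$. Equivalently, the singular locus of $w'\colon Y'\to\Aff^1$ along $Y'\setminus T$ should lie inside $(w')^{-1}(0)$: granted this, $Y\to Y'$ is an isomorphism in a neighborhood of each non-central fiber outside the $i_X$ torus critical points, and the lemma follows. I would verify this by a chart-by-chart analysis of $F=0$: on each boundary stratum (where some $x_{i,j}$ or $y_s$ vanishes) compute the Jacobian of $F$ and check that $F=0$ together with vanishing of the fiber Jacobian force either $\lambda=0$ or a return into $T$. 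A helpful structural observation is that $F$ depends on $\lambda$ only through the monomial $\lambda y_0\prod x_{i,j}\prod y_s$, which vanishes identically on every boundary stratum, so the surviving equations become $\lambda$-independent and should confine the boundary-singular locus to the central fiber.

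The main obstacle will be precisely this last boundary analysis. The boundary of the ambient product of projective spaces has a complicated combinatorial structure with many strata of various codimensions, and the crepant resolution of~\cite{PSh15} is built by an iterated sequence of blowups that must be tracked to ensure that $w$ stays smooth over $\Aff^1\setminus\{0\}$ on the exceptional loci. The strata on which several coordinates vanish simultaneously—where $Y'$ itself is typically singular—will be the most delicate: one has to separate the contribution of the singularities of $Y'$ (resolved crepantly in $Y$) from the behaviour of the projection to $\Aff^1$.
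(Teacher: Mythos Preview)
Your overall plan and your torus critical-point computation match the paper's, and you correctly identify the boundary analysis as the crux. However, there is a genuine misconception in your formulation of that step.

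You write that the boundary step is ``equivalently'' the claim that the singular locus of $w'\colon Y'\to\Aff^1$ along $Y'\setminus T$ lies inside $(w')^{-1}(0)$, and that granted this, $Y\to Y'$ is an isomorphism near every non-central fiber. Both assertions are false. The singular locus of the total space $Y'$ is \emph{horizontal}: it dominates $\Aff^1$, since $\partial F/\partial\lambda=\prod x_{i,j}\prod y_s$ vanishes on the entire boundary divisor, and along suitable boundary strata the remaining partials vanish as well for every $\lambda$. Consequently every fiber $(w')^{-1}(\lambda)$ is singular along the boundary, and the crepant resolution $Y\to Y'$ genuinely modifies every fiber, not only the central one. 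Your observation that ``the surviving equations become $\lambda$-independent'' is the right clue, but the conclusion to draw is not that boundary singularities are confined to $\lambda=0$; rather, it is that near the boundary the family is locally analytically a product with $\Aff^1$, so that resolving the fiber once resolves all fibers simultaneously and yields a family with smooth fibers over the exceptional locus.

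This is exactly what the paper does: after suitable local analytic coordinate changes (separating the cases $y_0\neq 0$ with some $y_j\neq 0$; $y_0\neq 0$ with $y_1=\cdots=y_{i_X-1}=0$; and $y_0=0$), the equation of $Y'$ near each boundary stratum is brought to a $\lambda$-free form $\prod u_i^{d_i}=\prod v_j$, to which the resolution procedure of \cite{PSh15} applies uniformly in $\lambda$. The point you must supply is not a Jacobian check on $Y'$ confining singularities to $\lambda=0$, but these coordinate changes and the verification that the resulting local models are $\lambda$-independent (for $\lambda\neq 0$, where the changes are valid). Once you reframe the boundary step this way, your outline coincides with the paper's proof.
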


\begin{proof}
Vanishing the derivative of $F$ by $\lambda$, which is equal to
$\prod_{j=1}^{d_i} x_{i,j}\prod_{s=0}^{i_X-1} y_s$, we see that the exceptional locus of a resolution of
singularities of $Y'$ lies over the locus
$$\left\{\prod_{j=1}^{d_i} x_{i,j}\prod_{s=0}^{i_X-1} y_s=0\right\}$$
if $i_X>1$, and $\{\prod_{j=1}^{d_i} x_{i,j}=0\}$ if $i_X=1$.
In particular, in the complement of this locus, that is, when all coordinates $x_{i,j}$ and $y_s$
are non-zero, the resolution is an isomorphism. Let us first check that the fibers of the resolution
are smooth in the neighborhood of the exceptional divisor, possibly except for the central fiber.
For this we follow the proof of~\cite[Theorem 5.1]{PSh15}, providing more details.

If $i_X>1$, let us consider the local chart $y_0=1$. Without loss of generality in the neighborhood of singular locus
one may assume that
$$x_{1,1}+\ldots+x_{1,d_1}=0,\ \  x_{1,d_1}=\ldots=x_{k,d_k}=1,\ \  \mathrm{and}\ \  x_{1,d_1-1}\neq 0.
$$
Renumbering variables if needed, one may also assume that there exist the number $r$ such that
$x_{i,d_i-1}\neq 0$ for $i\leq r$ and
$x_{i,1}=\ldots=x_{i,d_i-1}=0$ for $i> r$. Note that either we have~$y_1=\ldots=y_{i_X-1}=0$,
or for some $j$ we have $y_j\neq 0$; without loss of generality we may assume that $j={i_X-1}$.
In the latter case consider the set
$$
\{x_{i,j}\mid i=1,\ldots, r,\ \ j=1,\ldots,d_i-2\}\cup \{x_{i,j}\mid i=r+1,\ldots, k,\ \  j=1,\ldots,d_i-1\}.
$$
The cardinality of this set is $n-r-{i_X-1}$. Denote its elements by $x_{i_X},\ldots, x_{n-r}$.
Consider the analytic change of variables
\begin{multline*}
u_1=\left(x_{1,1}+\ldots+x_{1,d_1-1}+1\right)\cdot \prod_{i=r+1}^{k}(x_{i,1}+\ldots+x_{i,d_i-1}+1)^{\frac{d_i}{d_1}}
\cdot \prod_{i=1}^{r} x_{i,d_i-1}^{-\frac{1}{d_1}}\cdot y_l^{-\frac{1}{d_1}},\\
u_i=x_{i,1}+\ldots+x_{i,d_i-1}+1 \ \ \mbox{for } i=2,\ldots, r, \\
v_j=y_j\ \ \mbox{for } j=1,\ldots, i_X-2, \ \  v_{i_X-1}=\lambda-y_1-\ldots-y_{i_X-1},\ \
v_{s}=x_s \ \ \mbox{for } s=i_X,\ldots, n-r.
\end{multline*}
After this change of variables $Y'$ in our neighborhood is given by
$$
\prod_{i=1}^{r}u_i^{d_i}=\prod_{j=1}^{n-r} v_j.
$$
In particular, it has a crepant resolution, see~\cite[Resolution procedure 4.4]{PSh15}.
Moreover, these resolutions in different neighborhoods agree with each other. Finally, the equations of the hypersurface in these charts
do not depend on $\lambda$. This shows that in the neighborhood of exceptional divisor of the resolution
fibers of $w\colon Y\to \Aff^1$ are smooth.

Now consider 
the neighborhood of the locus
$$
y_0\neq 0, \quad
y_1=\ldots=y_{i_X-1}=0
$$
(
we allow here the case $i_X=1$ omitting the variables $y_j$ in this case).
The equation of $Y'$ in this neighborhood
can be locally analytically rewritten as the equation
\begin{equation*}\label{eq:LG-hypersurface-simplified}
\prod_{i=1}^k(x_{i,1}+\ldots+x_{i,d_i})^{d_i}=\lambda y_1\cdot\ldots\cdot
y_{i_X-1}\cdot \prod\limits_{i=1}^k \prod_{j=1}^{d_i}x_{i,j}.
\end{equation*}
The variety defined by this equation again can be crepantly resolved, see the proof of~\cite[Theorem 5.1]{PSh15},
and in the neighborhood of the exceptional divisor fibers of~$w\colon Y\to \Aff^1$ are smooth, except possibly for the central one.

Finally, for the case $i_X>1$ consider the neighborhood of $y_0=0$. We may assume that~$y_{i_X-1}=1$.
Obviously, in the neighborhood of any point of $Y'$ in this locus
one either has~\mbox{$y_j\neq 0$} for some index~\mbox{$j\in \{1,\ldots,i_X-2\}$},
or
$$
\lambda y_0-y_1-\ldots-y_{i_X-2}-1\neq 0.
$$
In the former case consider the change of coordinates
\begin{multline*}
u_0=y_0{y_j}^{-\frac{1}{i_X}},\quad v_1=y_1, \quad \ldots, \quad v_{j-1}=y_{j-1},\quad v_j=\lambda y_0-y_1- 
\ldots-y_{i_X-2}-1, \quad \\ v_{j+1}=y_{j+1},\quad \ldots, \quad v_{i_X-2}=y_{i_X-2}.
\end{multline*}
In the latter case consider the change of coordinates
$$
u_0=y_0\left(\lambda y_0-y_1-\ldots-y_{i_X-2}-1\right)^{-\frac{1}{i_X}}, \quad v_1=y_1, \quad \ldots, \quad v_{i_X-2}=y_{i_X-2}.
$$
Thus in the new coordinates the hypersurface $Y'$ is defined by equation
\begin{equation*}
u_0^{i_X}\prod_{i=1}^k(x_{i,1}+\ldots+x_{i,d_i})^{d_i}={\prod_{i=1}^k \prod_{j=1}^{d_i} x_{i,j}\prod_{j=1}^{i_X-2} v_j}.
\end{equation*}
Now, proceeding with change of variables $x_{i,j}$ as above, we again arrive to equations to
which~\cite[Resolution procedure 4.4]{PSh15} can be applied.
Again, in the neighborhood of the exceptional divisor fibers of $w\colon Y\to \Aff^1$ are smooth.

Summarizing, the resolution of $Y'$ described above shows that in the neighborhood of exceptional divisor of the resolution
all fibers, possibly except for the central one, are smooth, and the resolution is an isomorphism
in the complement to the set
$$\left\{\prod_{j=1}^{d_i} x_{i,j}\prod_{s=0}^{i_X-1} y_s=0\right\}$$
if $i_X>1$, and $\{\prod_{j=1}^{d_i} x_{i,j}=0\}$ if $i_X=1$.
Now let us find fibers which are singular in this complement.
For $i_X=1$ vanishing of $\frac{\partial F}{\partial x_{i,j}}$ implies that for all possible $i,j,l$ one has
$x_{i,j}=x_{i,l}$. Putting these equalities to $F$ one gets $\prod_{i=1}^k x_{i,1}^{d_i}-\lambda=0$,
which implies that a unique singular fiber, except for the central one, is the fiber over $\prod_{i=1}^k d_i^{d_i}$,
and it contains exactly one singular point. For $i_X>0$ in the neighborhood of singularities
one also have~$\lambda y_0-y_1-\ldots-y_{i_X-1}\neq 0$, because otherwise
the derivative $\frac{\partial F}{\partial y_1}$ equals
$$
-\prod_{j=1}^{d_i} x_{i,j}\prod_{s=1}^{i_X-1} y_s=0,
$$
which is non-zero.
Vanishing partial derivatives of $F$ by $x_{i,j}$ and $x_{i,l}$, one sees that at a singular point one should
have~$x_{i,j}=x_{i,l}$ for all possible $i$, $j$, $l$. Similarly, one should have~$y_r=y_s$ for $s,r=1,\ldots,i_X-1$.

Denote $\prod_{i=1}^k d_i^{d_i}$ by $d$.
Let $P$ be a singular point of a singular fiber $w^{-1}(\lambda)$.
We may assume that its $x_{i,j}$-coordinate equals $1$, $y_0$-coordinate equals $1$,
and $y_s$-coordinate equals~$y\in \CC$ for $s>0$.
Note that the equation $F=0$ at $P$ is
$$
d=(\lambda-i_X+1)y^{i_X-1}.
$$
Vanishing $\frac{\partial F}{\partial x_{i,j}}$ gives the same equation
$$
d=(\lambda-i_X+1)y^{i_X-1}.
$$
Vanishing $\frac{\partial F}{\partial y_s}$ for $s>0$ gives the equation
$$
(\lambda-(i_X-1)y)y^{i_X-2}-y^{i_X-1}=0,
$$
so that $\lambda=i_Xy$.
Finally, vanishing $\frac{\partial F}{\partial y_0}$ gives the equation
$$
di_X=\lambda y^{i_X-1}.
$$
These equations have $i_X$ solutions: if $\varepsilon$ is a primitive $i_X$-th root of unity,
then $y=\varepsilon^r{d}^{\frac{1}{i_X}}$ and $\lambda=\varepsilon^ri_X{d}^{\frac{1}{i_X}}$. This completes the proof of the lemma.
\end{proof}

To complete the proof of Theorem~\ref{theorem:LG ODP}, we need to check that the singular points are ordinary double ones.
For this we will use the following easy auxiliary lemma.

\begin{lemma}
\label{lemma:easy lemma}
Consider an $(n\times n)$-matrix
$$
M\left(
  \begin{array}{ccccc}
    b & a & \ldots & \ldots & a \\
    a & b & \ldots & \ldots & a \\
    \ldots & \ldots & \ldots & \ldots & \ldots \\
    \ldots & \ldots & \ldots & b & a \\
    a & a & \ldots & a & b \\
  \end{array}
\right).
$$
Then $\det M=0$ if and only if either $a=b$, or $(n-1)a+b=0$.
\end{lemma}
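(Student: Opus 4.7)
The plan is to exhibit $M$ as a rank-one perturbation of a scalar matrix and read off its determinant from the spectrum. Specifically, writing $M = (b-a)I_n + aJ_n$, where $I_n$ is the identity and $J_n$ is the $n\times n$ matrix all of whose entries equal $1$, reduces the computation to knowing the eigenvalues of $J_n$.

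Since $J_n$ has rank one, its eigenvalues are $n$, occurring once with eigenvector $(1,\ldots,1)$, and $0$, occurring $n-1$ times on the hyperplane $x_1+\ldots+x_n=0$. Shifting by $(b-a)I_n$ shows that $M$ has eigenvalues $b+(n-1)a$ with multiplicity $1$ and $b-a$ with multiplicity $n-1$. Taking the product yields
$$\det M = (b-a)^{n-1}\bigl(b+(n-1)a\bigr),$$
and the stated equivalence follows immediately: this vanishes exactly when $a=b$ or $(n-1)a+b=0$.

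There is no genuine obstacle here; the result is a standard circulant-type determinant computation. If one prefers to avoid spectral language, the same factorisation is obtained by first adding columns $2,\ldots,n$ to the first column (making that column constant with entry $b+(n-1)a$, which may be factored out), and then subtracting the first row from each of the remaining rows; the residual matrix is upper triangular with diagonal entries $b-a$, reproducing the formula above.
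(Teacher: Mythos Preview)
Your argument is correct and in fact gives more: you obtain the closed formula $\det M=(b-a)^{n-1}\bigl(b+(n-1)a\bigr)$, from which the equivalence is immediate. The paper takes a different, slightly more ad hoc route: it never computes the determinant, but instead argues directly with row dependencies. For the ``if'' direction it exhibits explicit null combinations of the rows (coefficients $(1,\ldots,1,1-n)$ when $a=b$, and $(1,\ldots,1)$ when $(n-1)a+b=0$); for the ``only if'' direction it assumes a nontrivial relation $\sum\alpha_i l_i=0$, sums the coordinates to force either $(n-1)a+b=0$ or $\sum\alpha_i=0$, and in the latter case picks a coordinate with $\alpha_1\neq 0$ to deduce $a=b$. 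Your spectral decomposition $M=(b-a)I_n+aJ_n$ is the cleaner and more conceptual approach, and the column-operations variant you sketch at the end is essentially the textbook computation; the paper's argument has the minor virtue of avoiding any eigenvalue language or determinant formula, working purely at the level of linear dependence, but yours is both shorter and more informative.
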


\begin{proof}
Let $l_1,\ldots l_n$ be the rows of $M$. Then $M$ is degenerate if and only if
there exists coefficients $\alpha_1,\ldots,\alpha_n$ such that they do not vanish altogether and
$$
\alpha=\alpha_1l_1+\ldots+\alpha_n l_n=0.
$$
If $a=b$, or $(n-1)a+b=0$, then one can take $\alpha_1=\ldots=\alpha_{n-1}=1$, $\alpha_n=1-n$
and~$\alpha_1=\ldots=\alpha_{n}=1$, respectively.
On the other hand, summing up coordinates of $\alpha$ one gets $(\sum \alpha_i)\cdot((n-1)a+b)$. Thus,
either $(n-1)a+b=0$, or $\sum \alpha_i=0$. In the latter case without loss of generality we may assume $\alpha_1\neq 0$.
Then
$$
0=\alpha_1b+\alpha_2a+\ldots+\alpha_na-(\sum \alpha_i)a=(b-a)\alpha_1,
$$
which implies $a=b$.
\end{proof}

Now let us check the type of the singularity of non-central singular fibers.

\begin{lemma}
\label{lemma: ODP}
Let $P$ be a singular point of a non-central fiber of $w\colon Y\to \Aff^1$.
Then $P$ is an ordinary double point.
\end{lemma}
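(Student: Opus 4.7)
The plan is as follows. By Lemma~\ref{lemma: only iX points} the singular point $P$ has all coordinates $x_{i,j}$ and $y_s$ nonzero, so $P$ lies away from the exceptional locus of the resolution $Y\to Y'$, and the latter is a local isomorphism at $P$. Working in the affine chart with $y_0=1$ and $x_{i,d_i}=1$, it is therefore enough to show that the Hessian of $F(\cdot,\cdot,\lambda_0)$ at $P$, viewed as a polynomial in the remaining $n$ affine coordinates, is non-degenerate; this is exactly the condition for the fiber $\{F=0\}$ to have an ordinary double point at $P$.

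I would compute this Hessian using the factorisation $F=A-L\cdot B\cdot C$, where
\[
A=\prod_{i=1}^k a_i^{d_i},\quad a_i=x_{i,1}+\ldots+x_{i,d_i-1}+1,\quad B=\prod_{i,\,j\le d_i-1}x_{i,j},\quad C=\prod_{s\ge 1}y_s,\quad L=\lambda-\sum_{s\ge 1} y_s.
\]
The critical-point identities extracted in the proof of Lemma~\ref{lemma: only iX points}, namely $a_i=d_i$, $y_s=y$, $L=y$ and $A=y^{i_X}=d$ at $P$, produce two decisive cancellations. First, every mixed derivative $\partial^2 F/\partial x_{i,j}\partial y_s$ vanishes at $P$, because it is proportional to $L-y_s=0$. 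Second, for $i\neq i'$ the $A$- and $LBC$-contributions to $\partial^2 F/\partial x_{i,j}\partial x_{i',l}$ exactly agree and cancel. Hence $\mathrm{Hess}(F)|_P$ is block-diagonal, with one $(i_X-1)\times(i_X-1)$ block in the $y$-variables (absent when $i_X=1$) and one $(d_i-1)\times(d_i-1)$ block for each $i$ in the $x$-variables; a direct differentiation shows that every block has precisely the shape addressed by Lemma~\ref{lemma:easy lemma}.

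The last step is to apply Lemma~\ref{lemma:easy lemma} to each block. I expect the $y$-block to come out as $y^{i_X-2}$ times the matrix with $b=2$, $a=1$, whose degeneracy would require $a=b$ or $(n-1)a+b=i_X=0$, neither of which occurs. Similarly the $i$-th $x$-block is $d/d_i$ times the matrix with $b=d_i-1$, $a=-1$, whose degeneracy would require $d_i=0$ or $(n-1)a+b=1=0$. Thus every block is non-singular, $\mathrm{Hess}(F)|_P$ is non-degenerate, and $P$ is an ordinary double point. The only genuinely non-mechanical ingredient is the vanishing of the mixed $xy$-derivatives at $P$, which is what causes the Hessian to decouple cleanly into the $x$- and $y$-directions; beyond that I expect no serious obstacle, only careful bookkeeping of the partial derivatives.
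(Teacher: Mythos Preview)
Your proposal is correct and follows essentially the same route as the paper: both arguments pass to the affine chart $y_0=x_{i,d_i}=1$, compute the Hessian (equivalently, the quadratic term after translating $P$ to the origin), observe that the mixed $x$--$y$ and cross-$i$ second derivatives vanish at $P$ so the Hessian is block-diagonal, and then invoke Lemma~\ref{lemma:easy lemma} on each block. Your identification of the blocks as $(d/d_i)$ times the $(d_i-1)$-matrix with $b=d_i-1$, $a=-1$ and $\alpha^{i_X-2}$ times the $(i_X-1)$-matrix with $b=2$, $a=1$ is exactly what the paper records (up to harmless normalisation), and your degeneracy checks $(n-1)a+b\in\{d/d_i,\,i_X\}$ match.
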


\begin{proof}
Let $P\in w^{-1}(\lambda)$.
Denote $d=\prod d_i^{d_i}$ and $\alpha={d}^{\frac{1}{i_X}}$, where the root is chosen such that $\lambda=i_X\alpha$.
Proof of Lemma~\ref{lemma: only iX points} implies that we may assume that $x_{i,j}$-coordinates and~$y_0$-coordinates of $P$ equals $1$
and $y_s$-coordinates for $s>0$ equal $\alpha$. Let us make a change of coordinates sending $P$ to the origin,
that is, put
$$
x_{i,j}=a_{i,j}+1\ \mbox{and}\ x_{i,d_i}=1\ \mbox{for}\ i\in 1,\ldots, k,\ j=1,\ldots, d_i, \ \  \ \ \ y_0=1,\ \ \ y_s=b_s+\alpha \ \mbox{for}\ s>0.
$$
Then the equation of $Y'$ in the new coordinates is given by vanishing of
\begin{multline*}
G=\prod_{i=1}^k(a_{i,1}+\ldots+a_{i,d_i-1}+d_i)^{d_i}
+(
b_1+\ldots+b_{i_X-1} - \alpha)\cdot
\prod_{i=1}^k \prod_{j=1}^{d_i-1} (a_{i,j}+1)\prod_{s=1}^{i_X-1} (b_s+\alpha).
\end{multline*}

Since $P$ is singular, constant and linear terms of $G$ vanish. To prove that $P$ is an ordinary double point
we need to check that quadratic term of $G$ is non-degenerate.

Let us find the quadratic term. 
The coefficient at $b_sb_r$ for all possible $s,r$ equals $\alpha^{i_X-2}$.
The coefficient at $a_{i,j}b_s$ equals $0$ for all possible $s$.
For $i\neq l$ the coefficient at $a_{i,j}a_{l,m}$ equals $0$.
Finally, put $D_i=\frac{(d_i-1)d}{d_i}$. 
Then the coefficient at $a_{i,j}a_{i,l}$ equals $D_i-d$ for all possible $i$ and $j\neq l$, and the coefficient at
$a_{i,j}a_{i,j}$ equals $D_i$. Denote $M_i=D_i-d$.
Thus, in coordinates $a_{1,1},\ldots, a_{k,d_k-1},b_1,\ldots,b_{i_X-1}$ the matrix of the quadratic form
given by the quadratic term is
{
$$
\left(
  \begin{array}{cccc|ccc|cccc|ccc}
    2D_1 & M_1 & \ldots & M_1 & 0 & \ldots & 0 & 0 & \ldots & \ldots & 0 & 0 & \ldots & 0 \\
    M_1 & 2D_1 & \ldots & \ldots & 0 & \ldots & 0 & 0 & \ldots & \ldots & 0 & 0 & \ldots & 0 \\
    \ldots & \ldots & \ddots & M_1 & 0 & \ldots & 0 & 0 & \ldots & \ldots & 0 & 0 & \ldots & 0 \\
    M_1 & \ldots & M_1 & 2D_1 & 0 & \ldots & 0 & 0 & \ldots & \ldots & 0 & 0 & \ldots & 0 \\
\hline
    0 & \ldots & \ldots & 0 & \ldots & \ldots & \ldots & 0 & \ldots & \ldots & 0 & 0 & \ldots & 0 \\
    0 & \ldots & \ldots & 0 & \ldots & \ldots & \ldots & 0 & \ldots & \ldots & 0 & 0 & \ldots & 0 \\
    0 & \ldots & \ldots & 0 & \ldots & \ldots & \ldots & 0 & \ldots & \ldots & 0 & 0 & \ldots & 0 \\
\hline
    0 & \ldots & \ldots & 0 & 0 & \ldots & 0 & 2D_k & M_k & \ldots & M_k & 0 & \ldots & 0 \\
    0 & \ldots & \ldots & 0 & 0 & \ldots & 0 & M_k & 2D_k & \ldots & \ldots & 0 & \ldots & 0 \\
    0 & \ldots & \ldots & 0 & 0 & \ldots & 0 & \ldots & \ldots & \ddots & M_k & 0 & \ldots & 0 \\
    0 & \ldots & \ldots & 0 & 0 & \ldots & 0 & M_k & \ldots & M_k & 2D_k & 0 & \ldots & 0 \\
\hline
    0 & \ldots & \ldots & 0 & \ldots & \ldots & \ldots & 0 & \ldots & \ldots & 0  & 2\alpha^{i_X-2} & \ldots & \alpha^{i_X-2} \\
    0 & \ldots & \ldots & 0 & \ldots & \ldots & \ldots & 0 & \ldots & \ldots & 0 & \alpha^{i_X-2} & \ddots & \alpha^{i_X-2} \\
    0 & \ldots & \ldots & 0 & \ldots & \ldots & \ldots & 0 & \ldots & \ldots & 0 & \alpha^{i_X-2} & \ldots & 2\alpha^{i_X-2} \\
  \end{array}
\right).
$$
}
By Lemma~\ref{lemma:easy lemma}, this matrix is non-degenerated.
\end{proof}

Thus, lemmas~\ref{lemma: only iX points} and~\ref{lemma: ODP} imply Theorem~\ref{theorem:LG ODP}.

\begin{remark}
Let $X$ be a Fano complete intersection of dimension $n>2$. Then $\Db(X)$ has a full exceptional collection if and only if $X$
is either a quadric, or $n$ is even and $X$ is a complete intersection of two quadrics.
The lengths of exceptional collections are $n+1$ if $X$ is odd-dimensional quadric, $n+2$ if $X$ is an even-dimensional
quadric, and $2n+4$ if $X$ is an even-dimensional intersection of two quadrics, see~\cite{Ka88}.
Thus, we expect that the central fiber
of Landau--Ginzburg model for $X$ has non-isolated singularities unless~$X$ is of one of the types above;
in these cases the central fiber should contain $1$, $2$, or $2n+5$ ordinary double points, respectively.
Using compactification procedure described in~\cite{PSh15}, X.\,Hu verified this expectation for quadrics~\cite{Hu22}.
\end{remark}

\begin{remark}
Let $X$ be a Fano complete intersection of hypersurfaces of degrees $d_1,\ldots,d_k$, let $\dim X=n$, and let $i_X=n+k+1-\sum d_i$ be the index of $X$.
Let $d=\prod d_i^{d_i}$. According to~\cite{Be95}, quantum cohomology $QH^*(X)$ is generated by
a class of hyperplane sec\-ti\-on~$h$ and 
a primitive cohomology $H_o(X)$ (that is, $H^n(X)$ if $n$ is odd and orthogonal to~$h^{\frac{n}{2}}$
in~$H^{\frac{n}{2}}(X)$ if $n$ is even). The relations defining the small quantum cohomology ring~$QH^*(X)$ are
$$
h^{\star(n+1)}=dqh^{n+1-i_X}\ \ \mbox{and}\ \  \alpha \star \beta=(\alpha\cdot \beta) \cdot \frac{1}{d_1\cdot\ldots\cdot d_k}(h^n-d q h^{n-i_X}),
$$
where $q$ is a quantum parameter and $\alpha,\beta\in H_o(X)$. Thus, Theorem~\ref{theorem:LG ODP} and Remark~\ref{remark:critical values} confirm the expectation that the Milnor ring
of Landau--Ginzburg model of Fano variety is isomorphic to its small quantum cohomology ring. (For the case of quadrics this
is proved in~\cite[Corollary 3.2]{Hu22}.)

\end{remark}

\end{document}